\newtheorem{theorem}{Theorem}[section]
\newtheorem{lemma}[theorem]{Lemma}
\newtheorem{corollary}[theorem]{Corollary}
\theoremstyle{remark} 
\newtheorem{example}[theorem]{Example}
\newtheorem{remark}[theorem]{Remark}
\newcommand{\s}{\mathfrak{s}}
\begin{document}

\title{Heegaard Floer correction terms and Dedekind-Rademacher sums}
\begin{abstract}
We derive a closed formula for the Heegaard Floer correction terms of lens spaces in terms of the classical Dedekind sum and its generalization, the Dedekind-Rademacher sum. Our proof relies on a reciprocity formula for the correction terms established by Ozsv\'ath and Szab\'o. A consequence of our result is that the Casson-Walker invariant of a lens space equals the average of its Heegaard-Floer correction terms. Additionally, we find an obstruction for the equality and equality with opposite sign, of two correction terms of the same lens space. Using this obstruction we are able to derive an optimal upper bound on the number of vanishing correction terms of lens spaces with square order second cohomology. 
\end{abstract}
\author{Stanislav Jabuka}
\email{jabuka@unr.edu}
\address{Department of Mathematics and Statistics, University of Nevada, Reno NV 89557.}
\thanks{S. Jabuka was partially supported by the NSF grant DMS 0709625.}

\author{Sinai Robins}
\email{rsinai@ntu.edu.sg}
\address{Division of Mathematical Sciences, School of Physical and Mathematical Sciences, Nanyang Technological University, Singapore, 637371.}

\author{Wang Xinli}
\email{wang0480@e.ntu.edu.sg}
\address{Division of Mathematical Sciences, School of Physical and Mathematical Sciences, Nanyang Technological University, Singapore, 637371.}

\maketitle
\section{Introduction and statement of results}
In \cite{OzsvathSzabo4} P. Ozsv\'ath and Z. Szab\'o introduced the {\em Heegaard Floer correction terms $d(Y,\s)\in \mathbb Q$} associated to a rational homology $3$-sphere $Y$ and a choice of a spin$^{\mathbb C}$-structure $\s$ on $Y$. Since their inception, the correction terms have proven to be a powerful tool in low-dimensional topology and have found applications to concordance questions \cite{OwensManolsecu, JabukaNaik, GrigsbyRubermanStrle},  $4$-manifold questions \cite{OzsvathSzabo4, OwensStrle} and others. 

When the $3$-manifold $Y$ is the lens space $L(p,q)$, it is customary to identify the set of spin$^{\mathbb C}$-structures on $L(p,q)$ with $H^2(L(p,q);\mathbb Z) \cong \mathbb Z/ p \mathbb Z $, allowing us to write $d(L(p,q),\s)$ as $d(L(p,q),n)$, $n\in \mathbb Z/p\mathbb Z$.  In \cite{OzsvathSzabo4}, Ozsv\'ath and Szab\'o proved a {\em reciprocity formula} for $d(L(p,q),n)$, valid under the restrictions $0<q<p$ and $0\le n <p+q$: 
\begin{equation} \label{ReciprocityForCorrectionTerms}
d(L(p,q),n) + d(L(q,p),n) =  \frac{1}{4} +\frac{n^2}{pq} +n\left( \frac{1}{pq} -\frac{1}{q}-\frac{1}{p} \right)+\frac{1}{4}\left(\frac{p}{q}+\frac{1}{pq} +\frac{q}{p}\right)-\frac{1}{2q}-\frac{1}{2p}.
\end{equation}
This formula is sufficient for a complete determination of $d(L(p,q),n)$, after taking into account that $L(p,q+kp)\cong L(p,q)$ for any $k\in \mathbb Z$, and that $d(L(1,1),0) = 0$. 

Reciprocity formulas such as \eqref{ReciprocityForCorrectionTerms} are common in the theory of Dedekind and Dedekind-Rademacher sums, indeed they are a crucial tool for the study of these number-theoretic functions. Before proceeding, we remind the reader of the definitions of the latter.   Let $\mathbb R \ni x\mapsto \bar B_1(x)$ be the function given by the {\em $1$-periodic first Bernoulli polynomial $\bar B_1(x) = \{x\} -\frac{1}{2}$}, where $\{x\} = x-\lfloor x \rfloor$ is the decimal part of $x\in \mathbb R$ (with $x\mapsto \lfloor x \rfloor$ as usual being the {\em floor function}). For relatively prime and nonzero integers $p,q$ and for any integer $n$, we define the {\em Dedekind-Rademacher sum $ s(q,p;n)$} as 
$$s(q,p;n) = \sum _{k=0}^{|p|-1} \bar B_1\left(\frac{kq+n}{p}\right) \bar B_1\left(\frac{k}{p}\right).$$ 
We note that $s(q,p;n)$ only depends on $q$ and $n$ through their moduli with respect to $p$. Recall that the {\em sawtooth function $\mathbb R\ni x\mapsto ((x))$} is defined as 
$$ ((x)) = \left\{
\begin{array}{cl}
x-\lfloor x \rfloor - \frac{1}{2} & \quad ; \quad x\notin \mathbb Z, \cr
0 & \quad ; \quad x\in \mathbb Z,
\end{array}
\right.
$$
giving rise to the classical {\em Dedekind sum $s(q,p)$}, defined for relatively prime nonzero integers $q,p$ as 
$$s(q,p) = \sum _{k=0}^{p-1} \left(\left( \frac{kq}{p}\right)\right) \cdot \left(\left( \frac{k}{p}\right)\right).$$
The Dedekind sum also only depends on $q$ through its modulus with respect to $p$. We invite the reader to check the easy relation $s(q,p;0)-s(q,p) = \frac{1}{4}$. 
\begin{remark} \label{RemarkOnDifferentDedekindRademacherSum}
There is a slightly different version of the Dedekind-Rademacher sum that appears in the literature, see for instance \cite{BeckRobins2, BeckRobins, Knuth}. Namely one can set 
$$\sigma (q,p;n) = \sum _{k=0}^{|p|-1} \left(\left( \frac{kq+n}{p}\right)\right) \cdot \left(\left( \frac{k}{p}\right)\right).$$
It is not hard to show that $s(q,p;n)$ and $\sigma(q,p;n)$ are related by 
$$s(q,p;n) - \sigma(q,p;n) = \left\{
\begin{array}{cl}
-\frac{1}{2}\left( \bar B_1 \left(\frac{n}{p}\right)+\bar B_1\left( \frac{k_q}{p}\right) \right) & \quad ;  \quad  n\not\equiv  0\,  (\text{mod } p),  \cr & \cr 
\frac{1}{4} & \quad ;  \quad  n\equiv  0\,  (\text{mod } p).
\end{array}
\right.
$$
In the above, $k_q\in \{0,...,|p|-1\}$ is the unique element with $k_qq+n \equiv 0\, (\text{mod } p )$. As we shall see, of these two definitions $s(q,p;n)$ is more closely related to the Heegaard Floer correction terms of lens spaces (compare the reciprocity formula \eqref{ReciprocityForCorrectionTerms} for $d(L(p,q),n)$ to the reciprocity formulas \eqref{KnuthReciprocity} and \eqref{ReciprocityForS} for $\sigma(q,p;n)$ and $s(q,p;n)$ respectively). 
\end{remark}
With these preliminaries out of the way, we are ready to state our main result. We state it only for lens spaces $L(p,q)$ with $p,q>0$. This hypothesis is for convenience only and does not pose any restrictions since 
\begin{equation} \label{RelationsForLensSpaces}
L(-p,q)\cong L(p,-q)\cong -L(p,q) \quad \text{ and } \quad d(-Y,\s) = -d(Y,\s).
\end{equation}
\begin{theorem} \label{main}
Let $p,q$ be two relatively prime, positive integers. Then for any integer $n$ the equality  
\begin{equation} \label{MainRelation}
d(L(p,q),n) =  2s(q,p;n)+s(q,p)-\frac{1}{2p}
\end{equation}
holds.
\end{theorem}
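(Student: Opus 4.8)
The plan is to verify \eqref{MainRelation} by an induction that mirrors the inductive structure underlying the reciprocity formula \eqref{ReciprocityForCorrectionTerms}. Both sides of \eqref{MainRelation} are governed by a reciprocity law and a base case, so the strategy is to show that the right-hand side $R(p,q,n) := 2s(q,p;n) + s(q,p) - \frac{1}{2p}$ satisfies exactly the same two constraints that, by the discussion following \eqref{ReciprocityForCorrectionTerms}, characterize $d(L(p,q),n)$ uniquely: namely (i) the reciprocity identity \eqref{ReciprocityForCorrectionTerms} with $d(L(p,q),n)$ replaced by $R(p,q,n)$ and $d(L(q,p),n)$ replaced by $R(q,p,n)$, valid for $0<q<p$ and $0\le n<p+q$; and (ii) the normalization $R(1,1,0)=0$, together with the periodicity $R(p,q+kp,n)=R(p,q,n)$. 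Since the Euclidean-type algorithm on $(p,q)$ reduces any pair to the base case, matching both constraints forces $R=d$ identically.

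The first concrete step is to record the reciprocity law for the Dedekind–Rademacher sum $s(q,p;n)$. One route is to invoke the classical reciprocity for $\sigma(q,p;n)$ (equation \eqref{KnuthReciprocity} in the paper, due to Knuth/Rademacher) and then translate it via the explicit difference $s(q,p;n)-\sigma(q,p;n)$ recorded in Remark~\ref{RemarkOnDifferentDedekindRademacherSum}; this yields \eqref{ReciprocityForS}. Combined with the classical Dedekind reciprocity $s(q,p)+s(p,q) = -\frac14 + \frac{1}{12}\!\left(\frac{p}{q}+\frac{q}{p}+\frac{1}{pq}\right)$ for $s(q,p)$, I can then form $R(p,q,n)+R(q,p,n) = 2\big(s(q,p;n)+s(p,q;n)\big) + \big(s(q,p)+s(p,q)\big) - \frac{1}{2p}-\frac{1}{2q}$ and simplify. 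The goal is to check that this sum equals the right-hand side of \eqref{ReciprocityForCorrectionTerms} verbatim. This is a finite algebraic manipulation: collect the $\frac{p}{q}+\frac{q}{p}+\frac{1}{pq}$ terms, the $n^2/(pq)$ term, the linear-in-$n$ terms, and the constants, and confirm that the coefficients match those in \eqref{ReciprocityForCorrectionTerms}.

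The base case is straightforward: for $p=q=1$ we have $s(1,1;0)=\bar B_1(0)^2 = \frac14$ and $s(1,1)=0$ (the empty-or-trivial sum), so $R(1,1,0) = 2\cdot\frac14 + 0 - \frac12 = 0 = d(L(1,1),0)$. The periodicity $R(p,q+kp,n)=R(p,q,n)$ is immediate from the observation, already noted in the excerpt, that $s(q,p;n)$ and $s(q,p)$ depend on $q$ only modulo $p$. One subtlety to handle with care: the reciprocity \eqref{ReciprocityForCorrectionTerms} for $d$ is only asserted in the range $0\le n<p+q$, whereas the number-theoretic reciprocity for $s(q,p;n)$ holds for all $n$; so I must make sure the induction only ever uses the $d$-reciprocity within its stated range, which is exactly the range in which the Euclidean reduction naturally operates (after reducing $q$ modulo $p$ and $n$ appropriately). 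I expect the main obstacle to be precisely this bookkeeping — tracking the constraint $0\le n<p+q$ through the reduction steps, and making sure the residue of $n$ chosen at each stage is consistent on both sides — rather than the algebra of matching the reciprocity formulas, which is routine though somewhat lengthy. An alternative that sidesteps the induction entirely would be to start from the known closed formula for $d(L(p,q),n)$ as a sum $\sum_{i} \big(\text{something}\big)$ and recognize it directly as $2s(q,p;n)+s(q,p)-\frac1{2p}$ by manipulating Bernoulli-polynomial sums; I would keep this in reserve as a cross-check on the constant terms.
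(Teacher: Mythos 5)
Your proposal is correct and follows essentially the same route as the paper: the paper also derives the reciprocity \eqref{ReciprocityForS} for $s(q,p;n)$ from Knuth's formula \eqref{KnuthReciprocity} via the $s$--$\sigma$ relation (with a short case analysis in $n$), combines it with classical Dedekind reciprocity to show that $T(q,p;n)=2s(q,p;n)+s(q,p)-\frac{1}{2p}$ satisfies \eqref{ReciprocityForCorrectionTerms}, and concludes $T=d$ from the periodicity in $q,n$ modulo $p$ and the normalization $T(1,1;0)=0=d(L(1,1),0)$. The only point to tighten is your aside that the $s$-reciprocity holds for all $n$; as in the paper, it is stated for $0\le n<p$, which is the range the Euclidean reduction actually needs.
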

A similar formula was found by M. Tange \cite{Tange} involving only Dedekind, but not Dedekind-Rademacher sums. Specifically, it is shown in \cite{Tange} that 
\begin{equation} \label{TangeFormula}
d(L(p,q),n) = 3s(q,p) + \frac{p-1}{2p} + 2 \sum _{k=1}^{n} \left( \left( \frac{2q'k-1}{2p}    \right)\right), 
\end{equation}
where $q'\in\{1,...,p-1\}$ is uniquely determined by $qq'\equiv 1 \, (\text{mod } p)$. In comparisson, the compactness of formula \eqref{MainRelation} owes to the novel use of Dedekind-Rademacher sums and to the use of the first Bernoulli polynomial in place of the sawtooth function. A formula similar to \eqref{TangeFormula} was found by A. N\' emethi \cite{Nemethi} for $3$-manifolds resulting from negative definite plumbings.

Before proceeding, we remark that the utility of the correction terms of lens spaces goes beyond lens spaces themselves. For example, in \cite{OzsvathSzabo21} Ozsv\'ath and Szab\'o show that certain correction terms of the $3$-manifold $S^3_{p/q}(K)$ gotten by $p/q$-framed Dehn surgery on an $L$-knot $K\subset S^3$, can be computed as $d(S^3_{p/q}(K),n) = d(L(p,q),n)-2t_{\left\lfloor \frac{|n|}{q}\right\rfloor}(K)$. Here $t_i(K)=\sum_{j=1}^\infty ja_{|i|+j}$ are the torsion coefficients of $K$, with $a_j$ being the coefficients of the Alexander polynomial $\Delta_K(t)$ of $K$, written as $\Delta_K(t) = a_0 +\sum_{j>0} a_j(t^j+t^{-j})$.  

This situation for the correction terms is not unlike that of the Casson-Walker invariant $\lambda (S^3_{p/q}(K))$, cf. \cite{Walker},  which can be computed for $S^3_{p/q}(K)$ as $\lambda (S^3_{p/q}(K)) = \lambda (L(p,q))-\frac{q}{p} \Delta ''_K(1)$. With regards to the Casson-Walker invariant, we have the following consequence of Theorem \ref{main}.
\begin{corollary} \label{RelationOfCorrectionTermsAndCassonInvariant}
The  average of the correction terms $d(L(p,q),\s)$ over all spin$^{\mathbb C}$-structures $\s \in Spin^{\mathbb C}(L(p,q))$, equals the Casson-Walker invariant $\lambda (L(p,q))$ of the lens space $L(p,q)$, i.e. 
\begin{equation} \label{RelationToCassonInvariant}
 \frac{1}{|p|}  \sum _{\s \in Spin^{\mathbb C}(L(p,q))} d(L(p,q),\s) = \lambda (L(p,q)).
 \end{equation}
\end{corollary}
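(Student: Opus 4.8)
The plan is to combine Theorem~\ref{main} with known closed formulas for the Casson--Walker invariant of lens spaces in terms of Dedekind sums. Recall that Walker's invariant satisfies $\lambda(L(p,q)) = \tfrac{1}{2}s(q,p)$ (in the normalization for which the reciprocity law and surgery formula quoted above hold; the precise constant is a bookkeeping matter to be fixed at the outset by checking $L(1,1)$ and one other small example). Thus the corollary reduces to the purely arithmetic identity
\begin{equation} \label{AverageIdentity}
\frac{1}{p}\sum_{n=0}^{p-1} \bigl( 2s(q,p;n) + s(q,p) - \tfrac{1}{2p}\bigr) = \tfrac{1}{2}s(q,p),
\end{equation}
so the whole content is to evaluate $\sum_{n=0}^{p-1} s(q,p;n)$.

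First I would substitute the definition $s(q,p;n) = \sum_{k=0}^{p-1}\bar B_1\!\left(\tfrac{kq+n}{p}\right)\bar B_1\!\left(\tfrac{k}{p}\right)$ into the sum over $n$ and interchange the order of summation, pulling the $k$-sum outside:
\begin{equation} \label{Interchange}
\sum_{n=0}^{p-1} s(q,p;n) = \sum_{k=0}^{p-1} \bar B_1\!\left(\tfrac{k}{p}\right)\left( \sum_{n=0}^{p-1}\bar B_1\!\left(\tfrac{kq+n}{p}\right)\right).
\end{equation}
The key step is the inner sum: as $n$ ranges over a complete residue system mod $p$, so does $kq+n$, hence $\sum_{n=0}^{p-1}\bar B_1\!\left(\tfrac{kq+n}{p}\right) = \sum_{j=0}^{p-1}\bar B_1\!\left(\tfrac{j}{p}\right) = \sum_{j=0}^{p-1}\left(\tfrac{j}{p}-\tfrac12\right) = \tfrac{p-1}{2} - \tfrac{p}{2} = -\tfrac12$, independent of $k$. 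Therefore $\sum_{n=0}^{p-1} s(q,p;n) = -\tfrac12 \sum_{k=0}^{p-1}\bar B_1\!\left(\tfrac{k}{p}\right) = -\tfrac12 \cdot \left(-\tfrac12\right) = \tfrac14$.

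Plugging this back, the left side of \eqref{AverageIdentity} becomes $\tfrac{1}{p}\bigl(2\cdot\tfrac14 + p\,s(q,p) - \tfrac12\bigr) = \tfrac{1}{p}\bigl(p\,s(q,p)\bigr) = s(q,p)$. This forces the identification of the normalization to be $\lambda(L(p,q)) = s(q,p)$ in the conventions compatible with \eqref{ReciprocityForCorrectionTerms} and the surgery formula $\lambda(S^3_{p/q}(K)) = \lambda(L(p,q)) - \tfrac{q}{p}\Delta_K''(1)$; I would cite Walker~\cite{Walker} (or the Lescop normalization) for the corresponding Dedekind-sum formula and reconcile constants there. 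The main obstacle, then, is not the combinatorics above — which is short — but pinning down the correct normalization constant for the Casson--Walker invariant so that the final equality is an identity and not merely an equality up to a universal scalar; this requires care because several incompatible normalizations of $\lambda$ coexist in the literature, and I would resolve it by evaluating both sides on $L(2,1)$ and $L(3,1)$ where $d$-invariants and $\lambda$ are independently known.
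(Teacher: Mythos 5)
Your evaluation of $\sum_{n=0}^{p-1} s(q,p;n) = \tfrac14$ is correct and is essentially the paper's own argument: the paper likewise interchanges the two summations and shows each inner sum equals $-\tfrac12$, the only cosmetic difference being that it invokes the summation formula $\sum_{n=0}^{p-1}\left(\left(\tfrac{x+n}{p}\right)\right)=((x))$ from Knuth, whereas you argue directly that $kq+n$ runs over a complete residue system modulo $p$. Either way one gets that the total of the correction terms is $p\cdot s(q,p)$, hence the average is $s(q,p)$, exactly as in the paper.

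The one genuine weak point is the final identification with the Casson--Walker invariant. The constant you start from, $\lambda(L(p,q))=\tfrac12 s(q,p)$, is not the one compatible with the conventions in force here; the paper quotes Walker for $\lambda(L(p,q))=s(q,p)$, and that is the formula actually needed. More importantly, you cannot ``force the identification of the normalization'' from your computation: that is circular, since the statement to be proved is precisely that the average equals $\lambda$. What your argument honestly establishes is that the average equals $s(q,p)$; to conclude, you must cite, independently of the corollary, Walker's evaluation $\lambda(L(p,q))=s(q,p)$ in the normalization matching the surgery formula quoted in the introduction. Your plan of checking small examples can only fix a universal constant once you already know from Walker's formula that $\lambda(L(p,q))$ is a universal multiple of $s(q,p)$, and note that $L(2,1)$ is useless for this calibration since $s(1,2)=0$ and the average of its correction terms vanishes. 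With the correct citation in place of the calibration step, your proof coincides with the paper's.
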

Relation \eqref{RelationToCassonInvariant} was first proved by J. Rasmussen \cite{Rasmussen}. We mention it here as it follows from Theorem \ref{main} by purely number-theoretical methods.  

The next theorem is based on the reciprocity formula \eqref{ReciprocityForCorrectionTerms} alone, and can be proved without Theorem \ref{main}. However the technique employed in its proof borrows heavily from analogous proofs in the context of Dedekind and Dedekind-Rademacher sums \cite{JabukaRobinsWang} and is therefore indirectly inspired by Theorem \ref{main}.  
\begin{theorem} \label{main2} Let $p,q$ be two relatively prime integers and let $n_1,n_2 \in \mathbb Z/p\mathbb Z$. 
\begin{itemize}
\item[(a)] If $d(L(p,q),n_1) = d(L(p,q),n_2)$ then 
$$ p \, | \, 2(n_1-n_2)(n_1+n_2-q+1).$$
\item[(b)] If $d(L(p,q),n_1) = - d(L(p,q),n_2)$ then 
$$ p \, | \, \left( (n_1-n_2)^2+ (n_1+n_2-q+1)^2\right).$$

\end{itemize}
\end{theorem}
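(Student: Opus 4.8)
The plan is to follow the method used for Dedekind and Dedekind--Rademacher sums in \cite{JabukaRobinsWang}: replace the two equalities of \emph{rational numbers} in Theorem \ref{main2} by equalities of \emph{integers} obtained from a suitable normalization of the correction terms, reduce those equalities modulo $p$, and then clear the inverse of $q$ to extract the stated divisibilities. Throughout, let $q'$ denote the inverse of $q$ modulo $p$, i.e. $qq'\equiv 1\pmod p$; we may take $0<q<p$ and $0\le n_1,n_2\le p-1$, and we set $D(n):=4p^2\,d(L(p,q),n)$.

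The single input extracted from the reciprocity formula is the first-difference identity
$$d(L(p,q),n)-d(L(p,q),n-1)=2\left(\left(\frac{2q'n-1}{2p}\right)\right),$$
which is the difference-form of Tange's formula \eqref{TangeFormula} and can be obtained from \eqref{ReciprocityForCorrectionTerms} by induction along the Euclidean algorithm applied to $(p,q)$, the sawtooth function entering exactly as in Tange's computation. Since $\tfrac{2q'n-1}{2p}$ is never an integer we have $((x))=\bar B_1(x)$ for these arguments, and a short computation gives $2((\tfrac{2q'n-1}{2p}))=\tfrac{2(q'n\bmod p)-1-p}{p}$; summing from $1$ to $n$ yields, for $0\le n\le p-1$,
$$D(n)-D(0)=4pq'n(n+1)-4pn-4p^2n-8p^2\sum_{j=1}^{n}\Bigl\lfloor\tfrac{q'j}{p}\Bigr\rfloor .$$
In particular $D(n)\in\mathbb Z$ for all $n$, and reducing modulo $4p^2$,
$$D(n)\equiv D(0)+4p\bigl(q'n^2+(q'-1)n\bigr)\pmod{4p^2}.$$

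For part (a): if $d(L(p,q),n_1)=d(L(p,q),n_2)$ then $D(n_1)=D(n_2)$, so the last congruence gives $q'n_1^2+(q'-1)n_1\equiv q'n_2^2+(q'-1)n_2\pmod p$; factoring, $(n_1-n_2)\bigl(q'(n_1+n_2)+q'-1\bigr)\equiv 0\pmod p$, and multiplying by $q$ turns the second factor into $n_1+n_2+1-q$. Hence $p\mid (n_1-n_2)(n_1+n_2-q+1)$, which in fact implies the stated $p\mid 2(n_1-n_2)(n_1+n_2-q+1)$ with room to spare. For part (b): if $d(L(p,q),n_1)=-d(L(p,q),n_2)$ then $D(n_1)+D(n_2)=0$, so $2D(0)\equiv -4p\bigl(q'(n_1^2+n_2^2)+(q'-1)(n_1+n_2)\bigr)\pmod{4p^2}$, which determines $D(0)$ modulo $2p^2$ in terms of $n_1,n_2$. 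Comparing this with the value of $D(0)=4p^2\,d(L(p,q),0)$ obtained from one further application of reciprocity (a congruence equivalent to the classical one $12p\,s(q,p)\equiv q+q'\pmod p$) and simplifying produces, after multiplying by an appropriate power of $q$, the quadratic congruence $(n_1-n_2)^2+(n_1+n_2-q+1)^2\equiv 0\pmod p$.

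The main obstacle is the last step of part (b): extracting the ``constant term'' $d(L(p,q),0)$ to the precision needed (essentially modulo $\tfrac1p\mathbb Z$, i.e. $D(0)$ modulo $2p^2$) from \eqref{ReciprocityForCorrectionTerms} alone, and carrying the $2$-adic (and $3$-adic) bookkeeping through cleanly. These small-prime subtleties are also what force the harmless factor of $2$ in the statement of part (a) and fix the precise shape of the quadratic form in part (b); the remainder is routine manipulation of the telescoping identity and of congruences modulo $p$.
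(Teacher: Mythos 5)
Your route is genuinely different from the paper's, and as written part (b) contains a real gap. For part (a) the argument does go through once one grants the first-difference identity $d(L(p,q),n)-d(L(p,q),n-1)=2\left(\left(\frac{2q'n-1}{2p}\right)\right)$: this is exactly the difference form of Tange's formula \eqref{TangeFormula}, which the paper quotes, so using it is legitimate (though your parenthetical claim that it ``can be obtained from \eqref{ReciprocityForCorrectionTerms} by induction along the Euclidean algorithm'' is an assertion of Tange's computation, not a proof). The telescoping sum and reduction modulo $4p^2$ then yield $p\mid(n_1-n_2)(n_1+n_2-q+1)$, which is in fact slightly stronger than the stated (a). One small inaccuracy: your displayed identity only shows that differences $D(n_1)-D(n_2)$ are integers, not that $D(n)\in\mathbb Z$; that is all you use, but the sentence ``In particular $D(n)\in\mathbb Z$'' does not follow from what you wrote.

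The gap is in part (b). Your congruence $D(n_1)+D(n_2)\equiv 2D(0)+4p\bigl(q'(n_1^2+n_2^2)+(q'-1)(n_1+n_2)\bigr)\ (\mathrm{mod}\ 4p^2)$ forces you to determine $D(0)$ modulo $2p^2$, i.e.\ a congruence for $6p\,s(q,p)$ (not merely $12p\,s(q,p)$) modulo $p$, including the case of even $p$ where $2$ is not invertible; you assert that ``simplifying produces \dots the quadratic congruence'' and then yourself flag exactly this step as ``the main obstacle,'' so (b) is sketched but not proved. The paper's mechanism avoids the constant term entirely: apply \eqref{ReciprocityForCorrectionTerms} at $n_1$ and at $n_2$, multiply both by $2pq$, and subtract (for (a)) or add (for (b)); the unknown quantities then appear only as $p\cdot\bigl(2q\,d(L(q,p),n_i)\bigr)$, which is $p$ times an integer by Lemma \ref{LemmaAboutIntegrality} ($2p\,d(L(p,q),n)\in\mathbb Z$), hence vanishes upon reduction modulo $p$ -- no evaluation of $d(L(p,q),0)$ and no Dedekind-sum congruence is needed. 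To complete your version of (b) you would have to prove the needed congruence for $6p\,s(q,p)$ modulo $p$ with the even-$p$ (and the implicit division-by-$q$) bookkeeping done in full; alternatively, adopt the paper's pairing trick, which sidesteps the constant term altogether.
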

\begin{corollary} \label{CaseOfPAPrime}
Let $p,q$ be relatively prime integers and assume that $p$ is a prime. Then the function 
$$\s \mapsto d(L(p,q),\s) : \frac{Spin^{\mathbb C}(L(p,q))}{\mathbb Z/2 \mathbb Z} \to \mathbb Q$$
is injective (where the action of $\mathbb Z/2\mathbb Z$ on $Spin^{\mathbb C}(L(p,q))$ is by conjugation $\s \mapsto \bar \s$). Additionally, the only possible vanishing correction term of $L(p,q)$ with $p$ odd is associated to the unique spin-structure of $L(p,q)$.
\end{corollary}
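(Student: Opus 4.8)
The plan is to deduce both assertions from Theorem~\ref{main2}, together with the standard description of the conjugation action on $Spin^{\mathbb C}(L(p,q))$. Recall that, in the labeling of $Spin^{\mathbb C}(L(p,q))$ by $\mathbb Z/p\mathbb Z$ used throughout, conjugation $\s\mapsto\bar\s$ corresponds to $n\mapsto q-1-n\pmod p$ (this can be read off from the recursive description of the correction terms in \cite{OzsvathSzabo4}, or checked directly from Theorem~\ref{main} using the symmetry $s(q,p;n)=s(q,p;q-1-n)$, which follows by reindexing the defining sum), and that $d(L(p,q),n)=d(L(p,q),q-1-n)$, so that $\s\mapsto d(L(p,q),\s)$ indeed descends to a well-defined function on $Spin^{\mathbb C}(L(p,q))/(\mathbb Z/2\mathbb Z)$, whose points are the unordered pairs $\{n,q-1-n\}$ with $n\in\mathbb Z/p\mathbb Z$.

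For injectivity, first suppose $p$ is an odd prime and $d(L(p,q),n_1)=d(L(p,q),n_2)$. By Theorem~\ref{main2}(a) we have $p\mid 2(n_1-n_2)(n_1+n_2-q+1)$, and since $p\nmid 2$ this forces $p\mid n_1-n_2$ or $p\mid n_1+n_2-q+1$. In the first case $n_1\equiv n_2\pmod p$, i.e.\ $\s_1=\s_2$; in the second case $n_2\equiv q-1-n_1\pmod p$, i.e.\ $\s_2=\bar\s_1$. Either way $\s_1$ and $\s_2$ represent the same class in $Spin^{\mathbb C}(L(p,q))/(\mathbb Z/2\mathbb Z)$, so the induced map is injective. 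The remaining prime $p=2$ is handled directly: since $\gcd(p,q)=1$ forces $q$ odd, $L(2,q)\cong L(2,1)\cong \mathbb{RP}^3$, whose two correction terms are $\tfrac14$ and $-\tfrac14$; as conjugation acts trivially on the two-element set $Spin^{\mathbb C}(\mathbb{RP}^3)$ (indeed $q-1-n\equiv n\pmod 2$), the induced map on the two-element quotient is again injective.

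For the statement about vanishing correction terms, assume $p$ is an odd prime. If $d(L(p,q),n)=0$, then $d(L(p,q),n)=-d(L(p,q),n)$, so Theorem~\ref{main2}(b) applied with $n_1=n_2=n$ gives $p\mid (n-n)^2+(2n-q+1)^2=(2n-q+1)^2$, hence $p\mid 2n-q+1$, i.e.\ $2n\equiv q-1\pmod p$. Since $p$ is odd, $2$ is invertible modulo $p$, so this congruence has the unique solution $n\equiv 2^{-1}(q-1)\pmod p$, which is exactly the conjugation-fixed label. Finally, since $p$ is odd we have $H^1(L(p,q);\mathbb Z/2)=0$, so $L(p,q)$ carries a unique spin structure, and the conjugation-fixed spin$^{\mathbb C}$-structure is precisely the one it induces; hence any vanishing correction term of $L(p,q)$ must be $d(L(p,q),\s)$ for $\s$ this spin structure.

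I expect the only points requiring genuine care to be the identification of the conjugation action with $n\mapsto q-1-n$ in the labeling at hand (needed to interpret the quotient), and the degenerate case $p=2$, where Theorem~\ref{main2}(a) gives no information and one must fall back on the explicit correction terms of $\mathbb{RP}^3$; the remainder is an immediate application of Theorem~\ref{main2}.
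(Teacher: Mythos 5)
Your proof is correct, and it follows the paper's overall strategy (Theorem \ref{main2}(a) for injectivity, Theorem \ref{main2}(b) for the vanishing statement, and the case $p=2$ handled by the explicit values $\pm\tfrac14$), but it differs in two ways worth noting. First, you invoke the explicit form of conjugation in this labeling, $n\mapsto q-1-n \pmod p$, to identify the case $p\mid(n_1+n_2-q+1)$ with $\s_2=\bar\s_1$ directly; the paper deliberately avoids needing this formula, arguing instead that for each $n_1$ there is at most one partner $n_2$ with the same correction term, so conjugation-invariance of $d$ forces any such pair to be a conjugate pair. Your route is cleaner but rests on the external fact that conjugation acts by $n\mapsto q-1-n$ in the Ozsv\'ath--Szab\'o labeling (correct, and consistent with the paper's $L(27,7)$ example); be aware that your alternative ``check via Theorem \ref{main}'' only establishes the invariance $d(L(p,q),n)=d(L(p,q),q-1-n)$, not that this involution \emph{is} conjugation, so the citation to \cite{OzsvathSzabo4} is the load-bearing justification there. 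Second, for the vanishing statement you apply part (b) with $n_1=n_2=n$ (using $d=-d$) to get $p\mid(2n-q+1)^2$ and hence pin down $n$ as the self-conjugate label in one step; the paper instead applies part (b) to two vanishing labels to show there is at most one, and then uses conjugation-invariance and uniqueness of the self-conjugate spin$^{\mathbb C}$-structure. Your shortcut is a slightly more direct argument and is perfectly valid, since Theorem \ref{main2}(b) places no restriction $n_1\neq n_2$.
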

\begin{example}
When $p$ is a prime, Corollary \ref{CaseOfPAPrime} says that either none or only one of the correction terms of $L(p,q)$ can vanish. Both of these possibilities occur. For instance considering  $L(5,1)$ and $L(5,2)$, one finds their correction terms to be  
$$\left\{ 1, \frac{1}{5},-\frac{1}{5},-\frac{1}{5},\frac{1}{5}  \right\}\quad \quad \text{ and } \quad \quad \left\{ \frac{2}{5},\frac{2}{5},-\frac{2}{5},0,-\frac{2}{5}  \right\}.$$
\end{example}
\begin{example}
The conclusion about the injectivity of the map $\s \mapsto d(L(p,q),\s):Spin^{\mathbb C}/(\mathbb Z/2\mathbb Z) \to \mathbb Q$ from Corollary \ref{CaseOfPAPrime}, fails in general if $p$ is not a prime. For instance
$$ d(L(27,7),9)=d(L(27,7),15)=d(L(27,7),18)=d(L(27,7),24)=-\frac{1}{6},$$
with $\{9,24\}$ and $\{15,18\}$ being pairs of conjugate spin$^{\mathbb C}$-structures. The image of $n\mapsto d(L(27,7),n)$ has cardinality $12$, falling short of the theoretically possible maximum of $14$.
\end{example}
An important application of the Heegaard Floer correction terms has been towards obstructing a rational homology $3$-sphere $Y$ from bounding a rational homology $4$-ball $X$. An easy algebro-topological obstruction for this happening is that the order of $H^2(Y;\mathbb Z)$ be a square, say $p^2$, and that the image of $H^2(X;\mathbb Z) \to H^2(Y;\mathbb Z)$ have order $p$. Under compatible affine identifications of $Spin^{\mathbb C}(X)$ and $Spin^{\mathbb C}(Y)$ with $H^2(X;\mathbb Z)$ and $H^2(Y;\mathbb Z)$ respectively, it was shown in \cite{OzsvathSzabo4} by Ozsv\'ath and Szab\'o that all  spin$^{\mathbb C}$-structures in Im$\left( H^2(X;\mathbb Z) \to H^2(Y;\mathbb Z)\right)$ have vanishing associated correction terms. Our next corollary to Theorem \ref{main2} shows that this number of vanishing correction terms is maximal among lens spaces with second cohomology of square order.
\begin{corollary} \label{ClaimAboutVanishingCorrectionTermsWithPASquare}
Let $p,q$ be relatively prime integers. Then $L(p^2,q)$ has at most $|p|$ vanishing correction terms. 
\end{corollary}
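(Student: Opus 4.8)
The plan is to use the obstruction in Theorem \ref{main2}(b) applied with $q$ replaced by $q$ but ambient manifold $L(p^2,q)$, together with a counting argument on residues modulo $p^2$. Recall that a correction term $d(L(p^2,q),n)$ vanishes if and only if $d(L(p^2,q),n) = -d(L(p^2,q),n)$, so Theorem \ref{main2}(b) with $n_1 = n_2 = n$ is not informative; instead the point is to bound the size of the set $Z = \{ n \in \mathbb Z/p^2\mathbb Z \, : \, d(L(p^2,q),n) = 0\}$ by applying part (b) to \emph{pairs} $n_1, n_2 \in Z$. For any two such $n_1, n_2$ we get $p^2 \mid (n_1 - n_2)^2 + (n_1 + n_2 - q + 1)^2$.

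First I would set $x = n_1 - n_2$ and $y = n_1 + n_2 - q + 1$, so that the constraint reads $x^2 + y^2 \equiv 0 \pmod{p^2}$, while $x + y = 2n_1 - q + 1$ and $y - x = 2n_2 - q + 1$ are determined by $n_1$ and $n_2$ individually. Fix a basepoint $n_0 \in Z$ (if $Z$ is empty there is nothing to prove) and run the argument over all $n \in Z$ with $n_1 = n$, $n_2 = n_0$: writing $u = n - n_0$ and $v = n + n_0 - q + 1$ we have $v = u + c$ where $c = 2n_0 - q + 1$ is a fixed constant, and $u^2 + v^2 = u^2 + (u+c)^2 = 2u^2 + 2cu + c^2 \equiv 0 \pmod{p^2}$. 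So I must bound the number of residues $u \bmod p^2$ solving the quadratic congruence $2u^2 + 2cu + c^2 \equiv 0 \pmod{p^2}$, and since distinct $n \in Z$ give distinct $u$, this bounds $|Z|$.

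The key step is the count of solutions of $2u^2 + 2cu + c^2 \equiv 0 \pmod{p^2}$. I would split into cases according to $\gcd(p,2)$ and $\gcd(p, \text{discriminant})$. When $p$ is odd, complete the square: multiplying by $2$, $(2u + c)^2 + c^2 \equiv 0 \pmod{2p^2}$, i.e. $w^2 \equiv -c^2 \pmod{p^2}$ with $w = 2u+c$ (using that $2$ is invertible mod $p^2$, so $u \leftrightarrow w$ is a bijection on residues mod $p^2$). A congruence $w^2 \equiv a \pmod{p^2}$ has at most $|p|$ solutions when $p$ is a prime power's worth of structure is tracked carefully — more precisely, for a prime power $\ell^{2e}$ the equation $w^2 \equiv a$ has at most $2\ell^e$ solutions, and chasing this through the prime factorization $p^2 = \prod \ell_i^{2e_i}$ via CRT gives at most $\prod 2\ell_i^{e_i}$... which is not obviously $\le |p|$. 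The cleaner route, and the one I expect to actually need, is: $w^2 \equiv -c^2 \pmod{p^2}$ forces $(w - ic)(w+ic) \equiv 0$ in a suitable sense; better yet, observe directly that $u^2 + v^2 \equiv 0 \pmod {p^2}$ with $v - u$ fixed means $(u,v)$ ranges over solutions of a fixed norm-form congruence, and the number of such $u$ modulo $p^2$ is at most $|p|$ by a Hensel/CRT analysis of $x^2 + y^2 \equiv 0$. I would handle the prime $2$ and primes $\equiv 3 \pmod 4$ (where $x^2+y^2 \equiv 0 \pmod{\ell^{2e}}$ forces $\ell^e \mid x$ and $\ell^e \mid y$, pinning $u$ mod $\ell^e$ — giving $\ell^e$ choices) separately from primes $\equiv 1 \pmod 4$ (where one factors $x^2 + y^2 = (x+iy)(x-iy)$ in $\mathbb Z[i]$ and counts), and then multiply the per-prime bounds; in every case the per-prime count of admissible $u \bmod \ell^{2e_\ell}$ comes out to exactly $\ell^{e_\ell}$, so the total is $\prod_\ell \ell^{e_\ell} = |p|$.

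The main obstacle is precisely this solution-counting for $x^2 + y^2 \equiv 0 \pmod{p^2}$ with a linear relation between $x$ and $y$ imposed: one must be careful that the linear constraint $v = u + c$ does not accidentally allow \emph{more} solutions than the naive per-prime bound, and one must correctly handle the prime $2$ (where $x^2+y^2 \equiv 0 \pmod{4}$ already forces $x \equiv y \pmod 2$, and higher powers of $2$ behave subtly) as well as the ramified/inert/split trichotomy for odd primes. A secondary subtlety is confirming that the bound $|p|$ is attained — the corollary only claims "at most", so sharpness is not needed for the proof, but it is reassuring to note (as the surrounding discussion does) that lens spaces bounding rational homology balls realize exactly $|p|$ vanishing terms, so no slack in the counting can be afforded and the per-prime counts must be tight.
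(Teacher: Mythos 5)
Your route is genuinely different from the paper's, and it is workable, but as written its central step is asserted rather than proved. The paper never counts solutions of a quadratic congruence: it uses parts (a) \emph{and} (b) of Theorem \ref{main2} simultaneously. Writing $x=n_1-n_2$ and $y=n_1+n_2-q+1$, the two divisibilities $p^2\mid 2xy$ and $p^2\mid x^2+y^2$ combine to give $p^2\mid (x\pm y)^2$, hence $p\mid x\pm y$, hence $p\mid 2x$ and $p\mid 2y$; for odd $p$ this already pins $n_1\equiv n_2 \pmod p$, and for even $p$ a short parity argument (writing $x=\ell_1 p/2$, $y=\ell_2 p/2$ with $\ell_1,\ell_2$ odd and feeding this back into $p^2\mid 2xy$) rules out the remaining case. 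This is elementary and avoids all arithmetic in $\mathbb Z[i]$. By discarding part (a) you force yourself to count, for fixed $c=2n_0-q+1$, the solutions of $u^2+(u+c)^2\equiv 0\pmod{p^2}$, which is exactly the ``main obstacle'' you flag.

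That obstacle is surmountable, so the approach can be completed, but two things in your sketch need repair. First, the claim that the per-prime count is ``exactly $\ell^{e_\ell}$'' is false: for a split prime $\ell\equiv 1\pmod 4$ with $\ell\nmid c$ the congruence factors as $(u-a)(u-b)\equiv 0\pmod{\ell^{2e}}$ with $a\not\equiv b\pmod\ell$, giving exactly $2$ solutions, not $\ell^{e}$. What is true, and what you need, is the upper bound: a valuation analysis of $(u-a)(u-b)\equiv 0\pmod{\ell^{2e}}$ gives at most $\max(\ell^{e},2\ell^{t})\le \ell^{e}$ solutions, where $t=v_\ell(a-b)$; the inert case forces $\ell^{e}\mid u$ and $\ell^{e}\mid u+c$ as you say; and for $\ell=2$ one must actually do the reduction ($c$ must be even, $c=2c_1$, and the count becomes $2\cdot\#\{w \bmod 2^{2e-1}: w^2\equiv -c_1^2\}$, which one checks is at most $2^{e}$ since $-1$ is not a square modulo $8$). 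None of this is in the proposal beyond the inert case, so the proof is incomplete as it stands: either supply these counts in full, or take the paper's shortcut and use part (a) of Theorem \ref{main2} alongside part (b).
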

\begin{remark}
We close this section by pointing out that equality \eqref{MainRelation} is indeed false if one drops the positivity condition on $p,q$. Corresponding formulas for $d(L(p,q),n)$ with one or both of $p,q$ negative can easily be derived from the proofs presented in the next section, but we didn't see much merit in doing so in lieu of \eqref{RelationsForLensSpaces}. 
\end{remark}

{\bf Acknowledgements } We thank Paolo Lisca for helpful comments and email correspondence. 
\section{Proofs}
\subsection{Proof of Theorem \ref{main}} As already noted in the previous section,  both sides of the equality \eqref{MainRelation} depend on $q$ and $n$ only through their modulus with respect to $p$. This allows us to assume, without loss of generality, that $0<q<p$ and $0\le n <p$.  

In \cite{Knuth}, D. Knuth proves the following reciprocity formula for $\sigma (q,p;n)$, valid under our assumptions of $0<q<p$ and $0\le n <p$:
\begin{equation} \label{KnuthReciprocity}
\sigma (q,p;n)+\sigma (p,q;n) = \frac{n^2}{2pq} + \frac{1}{12}\left( \frac{q}{p}+\frac{1}{pq} + \frac{p}{q}\right)-\frac{1}{2} \left\lfloor \frac{n}{q} \right\rfloor - \frac{1}{4} e(q,n),
\end{equation}
where $e(q,n)$ is defined as 
$$ e(q,n) = \left\{
\begin{array}{cl}
1 & \quad ; \quad n=0 \text{ or } n \not \equiv 0 \, (\text{mod } q), \cr
0 & \quad ; \quad n>0 \text{ and } n \equiv 0 \, (\text{mod } q). 
\end{array}
\right.
$$
\begin{lemma}
Let $p,q$ be relatively prime, positive integers with $q<p$ and let $n\in \{0,...,p-1\}$ be arbitrary. Then 
\begin{equation} \label{ReciprocityForS}
s(q,p;n)+s(p,q;n) =  \frac{n^2}{2pq} + \frac{1}{12}\left( \frac{q}{p}+\frac{1}{pq} + \frac{p}{q}\right) +\frac{n}{2}\left( \frac{1}{pq} - \frac{1}{q}  -\frac{1}{p}\right)  +\frac{1}{4} .
\end{equation}
\end{lemma}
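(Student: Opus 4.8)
The plan is to deduce \eqref{ReciprocityForS} from Knuth's reciprocity \eqref{KnuthReciprocity} by converting the two Dedekind--Rademacher sums $s(q,p;n)$ and $s(p,q;n)$ into the sums $\sigma(q,p;n)$ and $\sigma(p,q;n)$ via the elementary comparison recorded in Remark \ref{RemarkOnDifferentDedekindRademacherSum}. By hypothesis $0<q<p$ and $0\le n<p$, so every formula involved applies directly. Applying the comparison formula to $s(q,p;n)$ (with modulus $p$) and to $s(p,q;n)$ (with modulus $q$) and adding gives
\[
s(q,p;n)+s(p,q;n)=\bigl(\sigma(q,p;n)+\sigma(p,q;n)\bigr)+E_p+E_q,
\]
where $E_p:=s(q,p;n)-\sigma(q,p;n)$ and $E_q:=s(p,q;n)-\sigma(p,q;n)$ are the two explicit correction terms of Remark \ref{RemarkOnDifferentDedekindRademacherSum}. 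Substituting \eqref{KnuthReciprocity} for the parenthesized $\sigma$-sum, the Lemma becomes equivalent to the purely arithmetic identity
\[
E_p+E_q=\frac n2\Bigl(\frac1{pq}-\frac1p-\frac1q\Bigr)+\frac14+\frac12\lfloor n/q\rfloor+\frac14\,e(q,n),
\]
which is nothing but the difference of the right-hand sides of \eqref{ReciprocityForS} and \eqref{KnuthReciprocity}.

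I would then prove this identity by the case split built into Remark \ref{RemarkOnDifferentDedekindRademacherSum}. If $n=0$, then $E_p=E_q=\tfrac14$, $e(q,0)=1$ and $\lfloor 0/q\rfloor=0$, so both sides equal $\tfrac12$. If $0<n<p$, then $n\not\equiv0\pmod p$, hence $E_p=-\tfrac12\bigl(\bar B_1(n/p)+\bar B_1(k_q/p)\bigr)$ where $k_q\in\{1,\dots,p-1\}$ is the unique element with $k_qq+n\equiv0\pmod p$; here a further case distinction is needed according to whether $q\mid n$. The crucial input in both sub-cases is the identity
\[
k_qq+\ell_pp=pq-n,
\]
where $\ell_p\in\{0,\dots,q-1\}$ is the unique element with $\ell_pp+n\equiv0\pmod q$: the left-hand side is $\equiv-n\pmod{pq}$ by the Chinese Remainder Theorem, it is strictly positive, and it is at most $2pq-p-q<2pq-n$, so it must equal $pq-n$. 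Since $0\le k_q/p<1$ and $0\le \ell_p/q<1$, this yields at once
\[
\bar B_1(k_q/p)+\bar B_1(\ell_p/q)=\frac{k_q}{p}+\frac{\ell_p}{q}-1=-\frac{n}{pq},
\]
and moreover $\bar B_1(n/p)+\bar B_1(n/q)=\tfrac np+\tfrac nq-\lfloor n/q\rfloor-1$.

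With these two expansions the two sub-cases are routine. If $q\nmid n$, then $\ell_p\ge1$, $E_q=-\tfrac12\bigl(\bar B_1(n/q)+\bar B_1(\ell_p/q)\bigr)$ and $e(q,n)=1$; regrouping $E_p+E_q$ as $-\tfrac12\bigl(\bar B_1(n/p)+\bar B_1(n/q)\bigr)-\tfrac12\bigl(\bar B_1(k_q/p)+\bar B_1(\ell_p/q)\bigr)$ and inserting the displayed values collapses exactly to the claimed expression for $E_p+E_q$. If $q\mid n$, then $\ell_p=0$, so $\bar B_1(k_q/p)=\tfrac12-\tfrac n{pq}$, while $E_q=\tfrac14$, $e(q,n)=0$ and $\lfloor n/q\rfloor=n/q$; once more the identity checks out after simplification. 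This establishes \eqref{ReciprocityForS}.

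The step I expect to be the main obstacle is the bookkeeping in the boundary case $q\mid n$ with $0<n<p$. There the comparison formula of Remark \ref{RemarkOnDifferentDedekindRademacherSum}, read with modulus $q$, switches from its generic value $-\tfrac12\bigl(\bar B_1(n/q)+\bar B_1(\ell_p/q)\bigr)$ to the exceptional value $\tfrac14$, and at the same moment $\ell_p$ collapses to $0$ and $e(q,n)$ drops from $1$ to $0$. One must check that these three coordinated jumps cancel against one another and against the jump of $\lfloor n/q\rfloor$; granting that, everything else is elementary manipulation with the first Bernoulli polynomial $\bar B_1$.
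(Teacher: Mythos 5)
Your proposal is correct and follows essentially the same route as the paper: convert $s$ to $\sigma$ via Remark \ref{RemarkOnDifferentDedekindRademacherSum}, apply Knuth's reciprocity \eqref{KnuthReciprocity}, and verify the residual identity by the same three-case split ($n=0$; $n>0$ with $q\mid n$; $n>0$ with $q\nmid n$). The only (minor, and arguably cleaner) deviation is that you obtain $\bar B_1(k_q/p)+\bar B_1(\ell_p/q)=-\tfrac{n}{pq}$ from the single CRT identity $k_qq+\ell_pp=pq-n$, whereas the paper reaches the same quantity through B\'ezout coefficients and floor-function manipulations; your verified bookkeeping in the $q\mid n$ boundary case matches the paper's computation $k_q=p-\tfrac{n}{q}$.
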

\begin{proof}
We divide the proof into three cases, depending on the choice of $n$ used. In each case we rely on the  reciprocity formula \eqref{KnuthReciprocity} for $\sigma (q,p;n)$ and on the relation between $s(q,p;n)$ and $\sigma(q,p;n)$ elucidated in Remark \ref{RemarkOnDifferentDedekindRademacherSum}. 

{\bf Case of $\mathbf{n=0}$.} When $n=0$ then $\sigma (q,p;0)+\sigma (p,q;0) = -\frac{1}{4}+\frac{1}{12}\left( \frac{q}{p}+\frac{1}{pq} + \frac{p}{q}\right) $ while $s(q,p;0)-\sigma(q,p;0) = \frac{1}{4}=s(p,q;0)-\sigma(p,q;0)$. From these the $n=0$ case of the lemma follows. 

{\bf Case of $\mathbf{n>0}$ and $\mathbf{n\equiv 0\,(\text{mod } q)}$.} In this case $e(q,n)=0$ so that 
\begin{equation} \label{aux1}
\sigma(q,p;n)+\sigma(p,q;n) = \frac{n^2}{2pq} + \frac{1}{12}\left( \frac{q}{p}+\frac{1}{pq} + \frac{p}{q}\right)- \frac{n}{2q}.
\end{equation}
On the other hand, it is easy to see that $k_q=p-\frac{n}{q}$, which is used in the first of the following two relations:
\begin{align} \label{aux2}
s (q,p;n)-\sigma(q,p;n) & = -\frac{1}{2}\left( \bar B_1 \left(\frac{n}{p}\right)+\bar B_1\left( \frac{k_q}{p}\right) \right)\cr
& = -\frac{1}{2}\left( \frac{n}{p}-\frac{1}{2}+\bar B_1\left( \frac{p-\frac{n}{q}}{p}\right) \right) \cr
& = -\frac{1}{2}\left( \frac{n}{p}-\frac{1}{2}+\bar B_1\left( -\frac{n}{pq}\right) \right) \cr
& = -\frac{1}{2}\left( \frac{n}{p}-\frac{1}{2} -\frac{n}{pq}-\left\lfloor -\frac{n}{pq}\right\rfloor -\frac{1}{2} \right) \cr
& = -\frac{1}{2}\left( \frac{n}{p}-\frac{1}{2} -\frac{n}{pq}+1 -\frac{1}{2} \right) \cr
& = \frac{n}{2pq}  -\frac{n}{2p} \cr
s(p,q;n)-\sigma(p,q;n) & = \frac{1}{4}
\end{align}
Relations \eqref{aux1} and \eqref{aux2} yield the claim of the lemma for the present special case. 

{\bf Case of $\mathbf{n>0}$ and $\mathbf{n\not\equiv 0\,(\text{mod } q)}$.} In this setting $e(q,n)=1$ giving 
\begin{align} \label{aux3}
\sigma (q,p;n)+\sigma (p,q;n) & = \frac{n^2}{2pq} + \frac{1}{12}\left( \frac{q}{p}+\frac{1}{pq} + \frac{p}{q}\right)-\frac{1}{2} \left\lfloor \frac{n}{q} \right\rfloor - \frac{1}{4}, \cr
s(q,p;n)+s(p,q;n) & = \sigma (q,p;n)+\sigma (p,q;n)  \cr
& \quad \quad \quad - {\textstyle \frac{1}{2} \left( \bar B_1 \left(\frac{n}{p}\right)+ \bar B_1 \left(\frac{n}{q}\right) + \bar B_1\left( \frac{k_q}{p}\right)+ \bar B_1\left( \frac{k_p}{q}\right) \right). }
\end{align}
Before combining these two, we work on simplyfying the right-hand side of the second equation. Firstly, relying on our assumption of $0\le n<p$ and hence on $\lfloor \frac{n}{p}\rfloor$=0, we obtain 
\begin{align} \label{aux4}
 \bar B_1 \left(\frac{n}{p}\right)+ \bar B_1 \left(\frac{n}{q}\right) & = \frac{n}{p}+\frac{n}{q}-\left\lfloor \frac{n}{q}\right\rfloor - 1.
\end{align}
Pick integers $a,b$ such that $ap+bq=1$ and arrange that $b\in\{0,...,p-1\}$. Observe that $k_q\equiv -bn\, (\text{mod } p)$ and $k_p\equiv -an\, (\text{mod } q)$ giving us
\begin{align} \label{aux5}
 \bar B_1 \left(\frac{k_q}{p}\right)+ \bar B_1 \left(\frac{k_p}{q}\right) & =  \bar B_1 \left(\frac{-bn}{p}\right)+ \bar B_1 \left(\frac{-an}{q}\right), \cr
 & = -\frac{bn}{p} -\left\lfloor \frac{-bn}{p}\right\rfloor-\frac{an}{q} -\left\lfloor \frac{-an}{q}\right\rfloor -1, \cr
 & = -\frac{n}{pq}-\left\lfloor \frac{-qbn}{pq}\right\rfloor -\left\lfloor \frac{-an}{q}\right\rfloor -1,\cr
 & = -\frac{n}{pq}-\left\lfloor \frac{apn-n}{pq}\right\rfloor -\left\lfloor \frac{-an}{q}\right\rfloor -1,\cr
 & = -\frac{n}{pq}-\left\lfloor \frac{an}{q} - \frac{n}{pq}\right\rfloor -\left\lfloor \frac{-an}{q}\right\rfloor -1,\cr
 & = -\frac{n}{pq}-\left\lfloor \frac{an}{q} \right\rfloor -\left\lfloor \frac{-an}{q}\right\rfloor -1,\cr
 & = -\frac{n}{pq}.
 \end{align}
In the above we used the assumption that $n\not\equiv 0\,(\text{mod } q)$. For if we write $\frac{an}{q} =\ell + \frac{r}{q}$ with $\ell \in \mathbb Z$ and $r\in\{1,...,q-1\}$, then  $\frac{an}{q}-\frac{n}{pq} = \ell + \frac{rp-n}{pq}$ which, along with $1\le n\le p-1$ gives $\lfloor \frac{an}{q}-\frac{n}{pq} \rfloor = \ell = \lfloor \frac{an}{q}\rfloor$. Finally, combining \eqref{aux3}, \eqref{aux4} and \eqref{aux5} gives
$$s(q,p;n)+s(p,q;n) =  \frac{n^2}{2pq} + \frac{1}{12}\left( \frac{q}{p}+\frac{1}{pq} + \frac{p}{q}\right) + \frac{n}{2pq}  -\frac{n}{2p}-\frac{n}{2q}+\frac{1}{4}, $$
completing the proof of the lemma.
\end{proof}

The classical reciprocity formula for the Dedekind sum $s(q,p)$ can be found for example in \cite{GrosswaldRademacher}, and takes the form
\begin{equation} \label{DedekindReciprocity}
s (q,p)+s(p,q) = -\frac{1}{4} +  \frac{1}{12}\left( \frac{q}{p}+\frac{1}{pq} + \frac{p}{q}\right).
\end{equation}

We now have all ingredients in place to finish the proof of Theorem \ref{main}. Define $T(q,p;n)$ as 
$$T(q,p;n) = 2s(q,p;n)+s(q,p)-\frac{1}{2p}.$$
Then the reciprocity formulas \eqref{ReciprocityForS} and \eqref{DedekindReciprocity} lead to the reciprocity formula for  $T(q,p;n)$:
$$T(q,p;n)+T(p,q;n) = \frac{1}{4} +  \frac{n^2}{pq} + {n}\left( \frac{1}{pq} - \frac{1}{q}  -\frac{1}{p}\right)   + \frac{1}{4}\left( \frac{q}{p}+\frac{1}{pq} + \frac{p}{q}\right)  -\frac{1}{2p}-\frac{1}{2q}.$$
Clearly $T(q,p;n)$ satisfies the same reciprocity formula \eqref{ReciprocityForCorrectionTerms}  as $d(L(p,q),n)$, it depends on $q$ and $n$ only through their modulus with respect to $p$, and additionally $T(1,1;0) = 2s(1,1;0)-s(1,1)-\frac{1}{2} = 2\cdot \frac{1}{4} -0 -\frac{1}{2} = 0 $ which agrees with $d(L(1,1),0) = 0$. Accordingly we obtain $d(L(p,q),n) = T(q,p;n)$ thereby completing the proof of Theorem \ref{main}. 
\subsection{Proof of Corollary \ref{RelationOfCorrectionTermsAndCassonInvariant}} 
This is direct computation, using as input Theorem \ref{main} and the following formula, valid for all real numbers $x$ and for any positive integer $p$, see \cite{Knuth}:
\begin{equation} \label{FormulaForSums}
\sum _{n=0}^{p-1} \left(\left(  \frac{x+n}{p}\right)\right) = ((x)).
\end{equation}
Using \eqref{FormulaForSums} we can evaluate the sums: 
\begin{align} \nonumber
\sum_{n=0}^{p-1} \bar B_1\left(\frac{kq+n}{p} \right) & = -\frac{1}{2}+\sum _{n=0}^{p-1}  \left(\left(  \frac{kq+n}{p}\right)\right) = -\frac{1}{2} +((kq)) = -\frac{1}{2}, \cr 
\sum _{k=0}^{p-1} \bar B_1\left(\frac{k}{p} \right) & =-\frac{1}{2}+\sum _{k=0}^{p-1}  \left(\left(  \frac{k}{p}\right)\right) = -\frac{1}{2}. 
\end{align}
From these last two formulas we can then obtain a simplification for $\displaystyle \sum _{n=0}^{p-1} s(q,p;n)$:
\begin{align} \label{SumForDedekindRademacher}
\sum _{n=0}^{p-1} s(q,p;n) & = \sum _{n,k=0}^{p-1} \bar B_1\left(\frac{kq+n}{p} \right)  \bar B_1\left(\frac{k}{p} \right), \cr
& = -\frac{1}{2}  \sum _{k=0}^{p-1}  \bar B_1\left(\frac{k}{p} \right),  \cr
& = \frac{1}{4}.
\end{align}
Using \eqref{SumForDedekindRademacher} and Theorem \ref{main}, we can now complete the proof of Corollary \ref{RelationOfCorrectionTermsAndCassonInvariant}:

\begin{align} \nonumber
 \sum _{\s \in Spin^{\mathbb C}(L(p,q))}  d(L(p,q),\s) & = \sum _{n=0}^{p-1} d(L(p,q),n), \cr
 & = \sum _{n=0}^{p-1} \left( 2 s(q,p;n) +s(q,p) -\frac{1}{2p} \right), \cr
& = 2 \left(  \sum _{n=0}^{p-1} s(q,p;n) \right)+ p\cdot s(q,p) - \frac{1}{2}, \cr
& = \frac{1}{2} + p\cdot s(q,p) - \frac{1}{2}, \cr
& = p\cdot s(q,p).
\end{align}
Since according to \cite{Walker} the Casson-Walker invariant $\lambda(L(p,q))$ equals $s(q,p)$, the corollary follows.
\subsection{Proof of Theorem \ref{main2}}
We start the proof of Theorem \ref{main2} with a lemma. This lemma can be proved directly from the definition of the correction terms \cite{OzsvathSzabo4}, but we choose to use Theorem \ref{main} to provide an alternative proof. 
\begin{lemma}  \label{LemmaAboutIntegrality}
For any choice of relatively prime integers $p,q$ and for any integer $n$, the quantity $2p \cdot  d(L(p,q),n))$ is an integer. 
\end{lemma}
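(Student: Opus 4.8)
The plan is to use the closed formula \eqref{MainRelation} from Theorem \ref{main}, which reduces the claim to showing that $2p\bigl(2s(q,p;n)+s(q,p)-\tfrac{1}{2p}\bigr)\in\mathbb Z$, i.e.\ that $4p\,s(q,p;n)+2p\,s(q,p)-1\in\mathbb Z$. Since $1$ is already an integer, it suffices to prove that $4p\,s(q,p;n)\in\mathbb Z$ and $2p\,s(q,p)\in\mathbb Z$. As both sides of \eqref{MainRelation} depend on $q$ and $n$ only through their residues mod $p$, I may assume $0<q<p$ and $0\le n<p$; and by the relations \eqref{RelationsForLensSpaces} together with $d(-Y,\s)=-d(Y,\s)$, the sign of $p$ is immaterial, so I take $p>0$.

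First I would handle the classical Dedekind sum. Writing out $2p\,s(q,p)=2p\sum_{k=0}^{p-1}\bigl(\bigl(\tfrac{kq}{p}\bigr)\bigr)\bigl(\bigl(\tfrac{k}{p}\bigr)\bigr)$ and noting that for $0\le k<p$ with $k\ne 0$ one has $p\cdot\bigl(\bigl(\tfrac{k}{p}\bigr)\bigr)=k-\tfrac p2$ and $p\cdot\bigl(\bigl(\tfrac{kq}{p}\bigr)\bigr)=(kq\bmod p)-\tfrac p2$, a direct expansion shows $2p\,s(q,p)=\tfrac{2}{p}\sum_{k}(kq\bmod p-\tfrac p2)(k-\tfrac p2)$; collecting terms, the cross terms and the $\tfrac{p^2}{4}$ terms contribute multiples of $p$, and the leading term $\tfrac{2}{p}\sum_k (kq\bmod p)k$ is an integer because $q$ is invertible mod $p$ so $\{kq\bmod p\}$ is a permutation of $\{0,\dots,p-1\}$, making $\sum_k(kq\bmod p)k\equiv \sum_k k^2 q'\pmod p$ up to bookkeeping — in any case this is a standard fact (e.g.\ it follows from $12p\,s(q,p)\in\mathbb Z$, which is classical). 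Then for the Dedekind--Rademacher sum I would do the analogous expansion of $4p\,s(q,p;n)=4p\sum_{k=0}^{p-1}\bar B_1\bigl(\tfrac{kq+n}{p}\bigr)\bar B_1\bigl(\tfrac kp\bigr)$: each factor $p\,\bar B_1\bigl(\tfrac kp\bigr)=k-\tfrac p2$ and $p\,\bar B_1\bigl(\tfrac{kq+n}{p}\bigr)=\bigl((kq+n)\bmod p\bigr)-\tfrac p2$ is a half-integer with denominator dividing $2$, so the product is a quarter-integer with denominator dividing $4$, and hence $4p\,s(q,p;n)=\tfrac{4}{p}\sum_k\bigl(p\bar B_1(\tfrac{kq+n}{p})\bigr)\bigl(p\bar B_1(\tfrac kp)\bigr)$ is $\tfrac{4}{p}$ times an integer; the factor $p$ in the denominator is then absorbed by an argument identical to the Dedekind-sum case (permutation of residues plus elementary congruences for $\sum k$, $\sum k^2$).

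Alternatively — and this is probably the cleaner route — I would lean on the relation from Remark \ref{RemarkOnDifferentDedekindRademacherSum} between $s(q,p;n)$ and $\sigma(q,p;n)$, and on the known denominator of the generalized Dedekind sum $\sigma(q,p;n)$: it is classical (see \cite{Knuth,BeckRobins}) that $6p\,\sigma(q,p;n)\in\mathbb Z$, hence $2p\,\sigma(q,p;n)$ has bounded denominator, and the correction term $s(q,p;n)-\sigma(q,p;n)$ is, in every case of that remark, a $\tfrac12$-combination of values $\bar B_1(\tfrac{n}{p})$, $\bar B_1(\tfrac{k_q}{p})$, which are of the form $\tfrac{(\text{integer})}{p}-\tfrac12$; multiplying through by $2p$ gives an integer, so $2p\,s(q,p;n)$ differs from $2p\,\sigma(q,p;n)$ by an integer. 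Combined with $2p\,s(q,p)\in\mathbb Z$ (a clearing-denominators consequence of $6p\,s(q,p)\in\mathbb Z$ — indeed even $\tfrac{p}{\gcd(p,3)}\cdot 6 s(q,p)\in\mathbb Z$ classically), this yields $2p\,d(L(p,q),n)\in\mathbb Z$ via \eqref{MainRelation}.

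The main obstacle is not conceptual but is the careful verification that no stray factor of $p$ or of $2$ survives in the denominator after the expansions — i.e.\ proving cleanly that $p\mid \sum_{k=0}^{p-1}k\bigl((kq+n)\bmod p\bigr)$ up to the explicitly computed correction, and keeping track of the $2$ versus $4$ in the denominators so that the final count comes out to exactly $2p$ rather than, say, $4p$. I would organize this by substituting $\bar B_1(x)=((x))$ for non-integer $x$ (and $\bar B_1(x)=\tfrac12\cdot(-1)$ correction at integers, which only shifts by a half-integer and is harmless after multiplying by $2p$), reducing everything to the sawtooth sums whose half-integrality after scaling by $p$ is transparent, and then invoking the standard integrality of $12\,p\,s(q,p)$-type statements to finish. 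Since the statement only claims $2p\,d$ is an integer (not a sharper bound), the elementary half-integer/quarter-integer bookkeeping plus one classical denominator fact is enough, and I would present it in the shorter form using Remark \ref{RemarkOnDifferentDedekindRademacherSum}.
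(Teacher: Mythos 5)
Your reduction of the claim to the two separate statements $4p\,s(q,p;n)\in\mathbb Z$ and $2p\,s(q,p)\in\mathbb Z$ is where the argument genuinely breaks: neither statement is true in general, so neither of your two routes can be completed. Take $p=3$, $q=1$, $n=0$. Then $s(1,3)=\frac{1}{18}$ and $s(1,3;0)=s(1,3)+\frac{1}{4}=\frac{11}{36}$, so $2p\,s(q,p)=\frac{1}{3}$ and $4p\,s(q,p;n)=\frac{11}{3}$; only the combination $4p\,s(q,p;n)+2p\,s(q,p)-1=3$ is an integer. The classical denominator bound is $6p\,s(q,p)\in\mathbb Z$ (similarly for $\sigma$), and one is not allowed to divide by $3$, so deducing $2p\,s(q,p)\in\mathbb Z$ from $6p\,s(q,p)\in\mathbb Z$ or from $12p\,s(q,p)\in\mathbb Z$ is a non sequitur. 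Concretely, in your first route the term $\frac{2}{p}\sum_k k\,(kq\bmod p)$ is congruent mod $\mathbb Z$ to $\frac{2q}{p}\cdot\frac{(p-1)p(2p-1)}{6}$ and fails to be an integer when $3\mid p$ (for $p=3$, $q=1$ it equals $\frac{10}{3}$). The same stray factor of $3$ defeats the second route: $6p\,\sigma(q,p;n)\in\mathbb Z$ only gives $2p\,\sigma(q,p;n)\in\frac{1}{3}\mathbb Z$, and you again invoke the false $2p\,s(q,p)\in\mathbb Z$.

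The cure, and the way the paper proceeds, is to never separate the Dedekind--Rademacher sum from the Dedekind sum: the coefficient $3$ in front of $s(q,p)$ that your splitting discards is exactly what is needed. For $n=0$, using $s(q,p;0)=s(q,p)+\frac{1}{4}$ the paper writes $2p\,d(L(p,q),0)=6p\,s(q,p)+p-1$, an integer by the classical fact $6p\,s(q,p)\in\mathbb Z$. For $n>0$ it reduces the whole quantity $2p\bigl(2s(q,p;n)+s(q,p)-\frac{1}{2p}\bigr)$ modulo $1$: after discarding the integer contributions coming from the floor functions, what remains is $4p\sum_k\bigl(k^2\frac{q}{p^2}+k\frac{n}{p^2}\bigr)+2p\sum_k\bigl(k^2\frac{q}{p^2}+\frac{1}{4}\bigr)$ modulo $1$, and here the two sums must be kept together, since the coefficients $4p$ and $2p$ combine to $6p$ on the $k^2$-terms and thereby absorb the $6$ in $\sum_k k^2=\frac{(p-1)p(2p-1)}{6}$; the power-sum formulas then give an integer. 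If you wish to salvage your write-up, prove integrality of the combined expression $2p\bigl(2s(q,p;n)+s(q,p)\bigr)-1$ directly along these lines (your half-integer bookkeeping for the individual factors is fine as a first step), rather than of each summand separately.
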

\begin{proof}
Without loss of generality we assume that $0<q<p$ and that $n\in \{0,...,p-1\}$. If $n=0$ then 
\begin{align} \nonumber
2p \cdot d(L(p,q),0) & = 2p\cdot \left(2s(q,p;0) + s(q,p)-\frac{1}{2p}  \right), \cr
& = 2p\cdot  \left(3s(q,p)+\frac{1}{2} -\frac{1}{2p}  \right), \cr
& = 6p\cdot s(q,p)+p-1. 
\end{align}
As shown in \cite{GrosswaldRademacher}, $6p\cdot s(q,p) \in \mathbb Z$, proving the lemma for $n=0$. 

If $n>0$ then
\begin{align} \nonumber
2p \cdot d(L(p,q),n) & \equiv 2p\cdot \left( 2s(q,p;n)+s(q,p) - \frac{1}{2p}\right) \, (\text{mod } 1), \cr
& \equiv 2p\cdot  {\textstyle  \left[\sum_{k=0}^{p-1}  2 \bar B_1\left( \frac{kq+n}{p}  \right) \bar B_1\left( \frac{k}{p}  \right) + \sum _{k=0}^{p-1} \left( \left(  \frac{kq}{p} \right)\right) \left( \left(  \frac{k}{p} \right)\right)\right] \, (\text{mod } 1),} \cr
&\equiv 4p {\textstyle  \sum _{k=0}^{p-1} \left( \frac{kq+n}{p}-\frac{1}{2}   \right)\left(\frac{k}{p} - \frac{1}{2} \right) + 2p \sum_{k=1}^{p-1} \left( \frac{kq}{p} -\frac{1}{2}  \right)  \left( \frac{k}{p} -\frac{1}{2}  \right) \, (\text{mod } 1),} \cr
& \equiv 4p {\textstyle  \sum_{k=0}^{p-1} \left( k^2 \frac{q}{p^2} + k\frac{n}{p^2} \right)    + 2p \sum _{k=1}^{p-1} \left(   k^2 \frac{q}{p^2}    +\frac{1}{4}         \right)                \, (\text{mod } 1),} \cr
& \equiv 4p {\textstyle  \left( \frac{(2p-1)(p-1)q}{6p}  + \frac{(p-1)n}{2p} \right)    + 2p \left(   \frac{(2p-1)(p-1)q}{6p}   +\frac{p-1}{4}         \right)                \, (\text{mod } 1), }\cr
& \equiv  {\textstyle (2p-1)(p-1)q+ 2(p-1)n   + \frac{p(p-1)}{2}    \, (\text{mod } 1), }\cr
& \equiv  0    \, (\text{mod } 1),
\end{align}
as claimed.
\end{proof}
To prove part (a) of the theorem, suppose now that $0<q<p$ are two relatively prime integers and that $d(L(p,q),n_1) = d(L(p,q),n_2)$ for some choice of $n_1,n_2\in \{0,...,p-1\}$. Taking the reciprocity formula \eqref{ReciprocityForCorrectionTerms} first with $n=n_1$, then with $n=n_2$, multiplying each by $2pq$ and subtracting the two equations, yields
$$
p \cdot (2q \, d(L(q,p),n_1) - 2q\, d(L(q,p),n_2))  = 2(n_1^2-n_2^2) + 2(n_1-n_2) (1-q-p).
$$
Reducing the previous equation modulo $p$, taking Lemma \ref{LemmaAboutIntegrality} into account, gives 
$$ 0 \equiv  2(n_1-n_2)(n_1+n_2+1-q) \, (\text{mod } p),$$
which is the claim of part (a). 

For part (b) we use the same approach but rather than subtracting, we add the two versions of the reciprocity law \eqref{ReciprocityForCorrectionTerms}, and obtain
\begin{align} \nonumber
p \cdot (2q \, d(L(q,p),& n_1) +  2q\, d(L(q,p),n_2)) = \cr
 = & pq+ 2(n_1^2+n_2^2) +  2(n_1+n_2) (1-q-p)+(p^2 +1 +q^2) -2p-2q. \cr
\end{align}
Reducing again both sides modulo $p$ gives
\begin{align}\nonumber
0 & \equiv \left[ 2(n_1^2+n_2^2) + 2(n_1+n_2) (1-q)+(1 -q)^2\right]  (\text{mod } p) ,\cr
& \equiv \left[ (n_1^2-2n_1n_2+n_2^2) + (n_1^2+2n_1n_2+n_2^2)+2(n_1+n_2) (1-q)+(1 -q)^2 \right]  (\text{mod } p) ,\cr
& \equiv \left[ (n_1-n_2)^2 + (n_1+n_2+1-q)^2\right]  (\text{mod } p).
\end{align}
This completes the proof of Theorem \ref{main2}.
\subsection{Proof of Corollary \ref{CaseOfPAPrime}}
The corollary is trivially true for $p=2$ since the two correction terms of the lens space $L(2,1)$ are $\{\frac{1}{4}, -\frac{1}{4}\}$. Thus, we may assume that $0<q<p$ are relatively prime and that $p$ is an odd prime. Additionally suppose that $n_1,n_2\in\{0,...,p-1\}$ have the property that $d(L(p,q),n_1) = d(L(p,q),n_2)$. From part (a) of Theorem \ref{main2} we infer that $p$ must divide $2(n_1-n_2)(n_1+n_2-q+1)$ and hence that either $p$ divides $n_1-n_2$ or that $p$ divides  $n_1+n_2-q+1$. In the former case we are led to $n_1=n_2$, proving the corollary, while in the latter we get $p|(n_1+n_2-q+1)$. Since $0\le n_1,n_2<p$ and $0<q<p$, we see that the only way that $p$ can divide $n_1+n_2-q+1$ is that either $n_1+n_2-q+1=0$ or $n_1+n_2-q+1=p$ (but not both). In particular, for each $n_1$ there can be at most one $n_2$ with $d(L(p,q),n_1) = d(L(p,q),n_2)$. Since the correction terms corresponding two conjugate spin$^{\mathbb C}$-structures are equal, we infer that any such pair $(n_1,n_2)$ must correspond to a pair of conjugate spin$^{\mathbb C}$-structures. From this the asserted injectivity property follows.

If $d(L(p,q),n_1) = d(L(p,q),n_2)=0$ then by relying on part (b) of Theorem \ref{main2}, we conclude that $p$ has to divide $n_1-n_2$. For if we had instead that $p|(n_1+n_2-q+1)$ then from $p$ dividing $(n_1-n_2)^2+(n_1+n_2-q+1)^2$ we could conclude that $p$ divides $(n_1-n_2)^2$ and hence $n_1-n_2$. From this we immediately find that $n_1=n_2$. Since the unique spin-structure of $L(p,q)$ is the only self-conjugate spin$^{\mathbb C}$-structure, the claim of the corollary follows.
\subsection{Proof of Corollary \ref{ClaimAboutVanishingCorrectionTermsWithPASquare}}
Let $p,q$ be relatively prime and positive, and without loss of generality assume that $q<p^2$. If $n_1,n_2\in\{0,...,p^2-1\}$ are such that $d(L(p^2,q),n_1) = d(L(p^2,q),n_2) =0$, then parts (a) and (b) of Theorem \ref{main2} imply that $p^2$ must be diving both $2(n_1-n_2)(n_1+n_2-q+1)$ and $(n_1-n_2)^2+(n_1+n_2-q+1)^2$. Combining the two we see that $p^2$ must then also divide $((n_1-n_2) \pm (n_1+n_2-q+1))^2$ and hence that $p$ must divide $(n_1-n_2) \pm (n_1+n_2-q+1)$. Adding and subtracting these two congruences we obtain 
$$p| (2n_1 -q+1) \quad \text{ and } \quad p|(2n_2-q+1).$$
Adding and subtracting these latter congruences yet one more time, we arrive at 
\begin{equation} \label{usefulecongruence}
p| 2(n_1 -n_2) \quad \text{ and } \quad p|2(n_1+n_2-q+1).
\end{equation}
If $p$ is odd then $p|(n_1-n_2)$ showing that there can be at most $p$ solutions $n$ to $d(L(p^2,q),n)=0$. If $p$ is even, then \eqref{usefulecongruence} shows that $\frac{p}{2}$ divides both 
$n_1-n_2$ and $n_1+n_2-q+1$. We'd like to again conclude that $p|(n_1-n_2)$ as in the case of odd $p$. If $p|(n_1+n_2-q+1)$ then $p^2|(n_1+n_2-q+1)^2$ and so part (b) of Theorem \ref{main2} says that $p^2|(n_1-n_2)^2$ and hence that $p|(n_1-n_2)$, as desired. If $p$ does not divide either of $n_1-n_2$, $n_1+n_2-q+1$ then we can write 
$$n_1-n_2 = \ell _1 \cdot \frac{p}{2} \quad \text{ and } \quad n_1+n_2-q+1 = \ell _2\cdot \frac{p}{2} $$
with $\ell_1, \ell _2$ odd integers. From this, and from part (b) of Theorem \ref{main2} we get the equality 
$$2(n_1-n_2)(n_1+n_2-q+1) = 2\left( \ell_1 \cdot \frac{p}{2}\right) \cdot \left( \ell_2 \cdot \frac{p}{2}\right) = p^2\cdot \ell  $$
for some $\ell\in \mathbb Z$.  Cancelling $p^2$ on both sides of this last equation, yields the contradiction $\frac{\ell_1\ell_2}{2} = \ell$. Thus we are again led to conclude that $p$ divides $n_1-n_2$ and so the conclusion of Corollary \ref{ClaimAboutVanishingCorrectionTermsWithPASquare} follows. 


\end{document}